\newcommand{\Host}{\ensuremath{H}}
\newcommand{\Guest}{\ensuremath{G}}
\newcommand{\Guestclass}{\ensuremath{\mathfrak{F}}}
\newcommand{\CBipartites}{{\ensuremath{\mathfrak{C}\mathfrak{B}}}}
\newcommand{\Differences}{\ensuremath{\mathfrak{D}}}
\newcommand{\cn}[3]{\ensuremath{\operatorname{c}_{\ensuremath{#1}}^{#2}(#3)}}
\def\moverlay{\mathpalette\mov@rlay}
\def\mov@rlay#1#2{\leavevmode\vtop{%
   \baselineskip\z@skip \lineskiplimit-\maxdimen
   \ialign{\hfil$\m@th#1##$\hfil\cr#2\crcr}}}
\newcommand{\charfusion}[3][\mathord]{
    #1{\ifx#1\mathop\vphantom{#2}\fi
        \mathpalette\mov@rlay{#2\cr#3}
      }
    \ifx#1\mathop\expandafter\displaylimits\fi}
\newcommand{\cupdot}{\charfusion[\mathbin]{\cup}{\cdot}}
\newcommand{\disver}{\ensuremath{\mathop{\cupdot}}}
\DeclareMathOperator{\ldim}{ldim} % degeneracy
\newtheorem{theorem}{Theorem}
\crefname{theorem}{Theorem}{Theorems}
\newtheorem{lemma}[theorem]{Lemma}
\crefname{lemma}{Lemma}{Lemmas}
\newtheorem{corollary}[theorem]{Corollary}
\crefname{corollary}{Corollary}{Corollaries}
\crefname{figure}{Figure}{Figures}
\def\ovl{\overline}
\begin{document}

\title{A Note on Covering Young Diagrams with Applications to Local Dimension of Posets}

\author{
 Stefan Felsner, %\thanks{Technical University Berlin},
 Torsten Ueckerdt %\thanks{Karlsruhe Institute of Technology}
}

\maketitle

\begin{abstract}
  We prove that in every cover of a Young diagram with $\binom{2k}{k}$
  steps with generalized rectangles there is a row or a column in the
  diagram that is used by at least $k+1$ rectangles.  We show that
  this is best-possible by partitioning any Young diagram with
  $\binom{2k}{k}-1$ steps into actual rectangles, each row and each
  column used by at most $k$ rectangles.  This answers two questions
  by Kim~\textit{et al.}~\cite{KMMSSUW18}.
 
  Our results can be rephrased in terms of local covering numbers of
  difference graphs with complete bipartite graphs, which has
  applications in the recent notion of local dimension of partially
  ordered sets.
\end{abstract}

%%%%%%%%%%%%%%%%%%%%%%%%%%%%%%%%%%
%%         INTRODUCTION         %%
%%%%%%%%%%%%%%%%%%%%%%%%%%%%%%%%%%
\section{Introduction}

Let $\mathbb{N}$ denote the set of all natural numbers (i.e., positive
integers).  For $x \in \mathbb{N}$ we denote $[x] = \{1,\ldots,x\}$ to
be the set of the first $x$ natural numbers.  A \emph{Young diagram}
with $r$ rows and $c$ columns is a subset $Y \subseteq [r] \times [c]$
such that whenever $(i,j) \in Y$, then $(i-1,j) \in Y$ provided $i
\geq 2$, as well as $(i,j-1) \in Y$ provided $j \geq 2$.  A Young
diagram\footnote{In the literature our Young diagrams are more
  frequently called Ferrers diagrams. We stick to Young diagram to be
  consistent with \cite{KMMSSUW18}.} 
is visualized as a set of axis-aligned unit squares that are
arranged consecutively in rows and columns, each row starting in the
first column, and with every row (except the first) being at most as
long as the row above.  The number of steps of a Young diagram $Y$ is
the number of different row lengths in $Y$, i.e., the cardinality of
\[ Z = \{ (s,t) \in Y \mid (s+1,t) \notin Y \text{ and } (s,t+1)
\notin Y\},
\] where elements of $Z$ are called \emph{steps} of $Y$.  Young
diagrams with $n$ elements, $r$ rows, $c$ columns, and $z$ steps,
visualize partitions of the natural number $n$ into~$r$ unlabeled
positive integer summands (summand $s$ being the length of row $s$)
with summands on $z$ different values and largest summand being $c$.

\begin{figure}[t]
 \centering
 \includegraphics{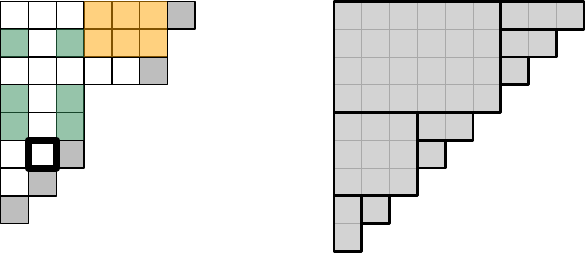}
 \caption{
 \textbf{Left:} A Young diagram $Y$ with $r=8$ rows, $c=7$ columns,
and $z=5$ steps.  Highlighted are the set $Z$ of steps (gray), the
element $(i,j) = (6,2) \in Y$ (bold boundary), the generalized
rectangle $\{2,4,5\} \times \{1,3\}$ (green), and the actual rectangle
$\{1,2\} \times \{4,5,6\}$ (orange).  \textbf{Right:} The Young diagram
$Y_9$ with $9$ steps and a $(2,3)$-local partition of $Y$ with actual
rectangles.}
 \label{fig:Young-diagram}
\end{figure}

A \emph{generalized rectangle} in a Young diagram
$Y \subseteq [r] \times [c]$ is a set $R$ of the form $R = S \times T$
with $S \subseteq [r]$ and $T \subseteq [c]$ and $R\subseteq Y$. Note
that (unless $Y = [r]\times [c]$) not every set of the form
$R = S \times T$ with $S \subseteq [r]$ and $T \subseteq [c]$
satisfies $R \subseteq Y$.  A generalized rectangle $R = S \times T$
with $S$ being a set of consecutive numbers in $[r]$ and $T$ being a
set of consecutive numbers in $[c]$ is an \emph{actual rectangle}.  A
generalized rectangle $R = S \times T$ \emph{uses} the rows in $S$ and
the columns in $T$.  See the left of \cref{fig:Young-diagram} for an
illustrative example.

Motivated by applications for the local dimension of partially ordered
sets, we investigate covering a Young diagram $Y$ with generalized
rectangles such that every row and every column of $Y$ is used by as
few generalized rectangles in the cover as possible.  We say that $Y$
is \emph{covered} by a set $C$ of generalized rectangles if $Y =
\bigcup_{R \in C} R$, i.e., $Y$ is the union of all rectangles in $C$.
In this case we also say that $C$ is a \emph{cover} of $Y$.  If
additionally the rectangles in $C$ are pairwise disjoint, we call $C$
a \emph{partition} of $Y$.  For example, the right of
\cref{fig:Young-diagram} shows a Young diagram with a partition into
actual rectangles.

\begin{theorem}\label{thm:main-simple} For any $k \in \mathbb{N}$, any
Young diagram $Y$ can be covered by a set $C$ of generalized
rectangles such that each row and each column of $Y$ used by at most
$k$ rectangles in $C$ if and only if $Y$ has strictly less than
$\binom{2k}{k}$ steps.
\end{theorem}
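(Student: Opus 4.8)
The plan is to prove a two-parameter refinement. For $a,b\in\mathbb{N}$ let $N(a,b)$ be the largest number of steps of a Young diagram that admits a cover by generalized rectangles in which every row is used at most $a$ times and every column at most $b$ times, and let $N^{\mathrm{part}}(a,b)$ be the analogous quantity where we insist on a partition into actual rectangles. Since a partition into actual rectangles is in particular a cover by generalized rectangles, $N^{\mathrm{part}}(a,b)\le N(a,b)$. I claim both equal $\binom{a+b}{a}-1$, so that \cref{thm:main-simple} is the case $a=b=k$. The whole argument is organized around the Pascal recursion $\binom{a+b}{a}=\binom{a+b-1}{a-1}+\binom{a+b-1}{a}$, equivalently $\bigl(\binom{a+b}{a}-1\bigr)=\bigl(\binom{a+b-1}{a-1}-1\bigr)+\bigl(\binom{a+b-1}{a}-1\bigr)+1$, with base cases $N(a,0)=N(0,b)=0$ (no row, resp.\ column, may be used, so only the empty diagram qualifies).

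For the achievability bound $N^{\mathrm{part}}(a,b)\ge\binom{a+b}{a}-1$ — which gives the ``if'' direction — I induct on $a+b$. Write the distinct row lengths of $Y$ as $L_1>\dots>L_z$, with length $L_i$ occupying rows $\rho_{i-1}+1,\dots,\rho_i$ (with $\rho_0=0$). Given $z\le\binom{a+b}{a}-1$, pick a split index $i$ and place the single actual rectangle $R=[\rho_i]\times[L_i]$. Removing $R$ leaves a top-right piece $Y_T$ (rows $1,\dots,\rho_{i-1}$, columns $>L_i$), a Young diagram with $i-1$ steps, and a bottom-left piece $Y_B$ (rows $>\rho_i$), a Young diagram with $z-i$ steps; the rows of length exactly $L_i$ are covered by $R$ alone. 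Since $R$ shares its rows with $Y_T$ and its columns with $Y_B$ but nothing else, a partition of $Y_T$ within budget $(a-1,b)$ and of $Y_B$ within budget $(a,b-1)$ combine with $R$ into a partition of $Y$ within budget $(a,b)$. By the recursion one can choose $i\in\{1,\dots,z\}$ with $i-1\le\binom{a+b-1}{a-1}-1$ and $z-i\le\binom{a+b-1}{a}-1$ exactly when $z\le\binom{a+b}{a}-1$, and the induction closes.

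For the impossibility bound $N(a,b)\le\binom{a+b}{a}-1$ — the ``only if'' direction — the first step is the structural observation that a single generalized rectangle $R=S\times T\subseteq Y$ can contain at most one step: if it contained steps $(s,t)$ and $(s',t')$ with $s<s'$, and hence $t>t'$, then $(s',t)\in S\times T\subseteq Y$ would force row $s'$ to have length at least $t>t'$, contradicting that $(s',t')$ is a step. Consequently, in any cover the $z$ steps are covered by $z$ distinct rectangles $R_1,\dots,R_z$, and a rectangle covering the step $(s_i,t_i)$ necessarily has $\max S=s_i$ and $\max T=t_i$; thus the chosen rectangles have strictly increasing maximum rows $s_1<\dots<s_z$ and strictly decreasing maximum columns $t_1>\dots>t_z$. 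The budget hypothesis says each row index lies in at most $a$ of the sets $S_i$ and each column index in at most $b$ of the sets $T_i$.

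The remaining task is to convert these constraints into the sharp bound $z\le\binom{a+b}{a}-1$, and this is where I expect the real difficulty to lie, precisely because a cover by generalized rectangles is far less rigid than a partition into actual rectangles: the rectangles may overlap and may skip rows and columns, so one cannot simply cut the diagram and recurse as in the achievability proof. The mechanism I would exploit is that covering the corners is not enough — the inner cell $(s_i,t_{i+1})$ between consecutive steps also lies in $Y$ and must be covered by a rectangle that simultaneously uses row $s_i$ and column $t_{i+1}$, forcing the per-row and per-column usage counts to accumulate monotonically as one walks down the staircase. Encoding, for each step, the pattern of how these usages have built up as a lattice path with at most $a$ up-steps and at most $b$ right-steps, I would aim to show that distinct steps receive distinct paths while the unique ``full'' path cannot occur, yielding at most $\binom{a+b}{a}-1$ steps. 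Matching this counting exactly to the central binomial coefficient — in particular accounting for the $-1$ — is the crux of the whole note.
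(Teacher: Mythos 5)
Your ``if'' half is correct, and it is essentially the paper's own argument for \cref{enum:construction} of \cref{thm:main-general}: peel off one actual rectangle $R=[\rho_i]\times[L_i]$, note that the two residual pieces are Young diagrams with $i-1$ and $z-i$ steps sharing only rows, respectively only columns, with $R$, give them budgets $(a-1,b)$ and $(a,b-1)$, and close the induction with Pascal's rule. (The paper first normalizes to the staircase $Y_z$ via \cref{lem:reduction-to-Yk} and then makes the same cut; you inline that normalization, which is fine.)

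The ``only if'' half, however, is not a proof but a declaration of intent, and that is the substantive direction of \cref{thm:main-simple}. What you actually establish is correct but weak: each generalized rectangle contains at most one step, hence $z$ distinct rectangles $R_1,\dots,R_z$ cover the steps, with $\max S_m=s_m$ and $\max T_m=t_m$; beyond that, the budget hypothesis is merely restated, never exploited. The proposed mechanism --- assign to each step a lattice path with at most $a$ up-steps and $b$ right-steps recording how row/column usages ``accumulate'', then show the assignment is injective and misses the full path --- is never defined: you do not say which path a step receives, why distinct steps get distinct paths, or why the full path is excluded, and you yourself call this ``the crux of the whole note'' and leave it open. The difficulty you correctly identify (rectangles may overlap and skip rows/columns, so there is no obvious monotone accumulation) is real, and the paper's proof of \cref{enum:argument} of \cref{thm:main-general} circumvents it by a different, fully inductive mechanism: reduce to $Y_z$ with $z=\binom{i+j}{i}$ (truncating rows handles larger $z$), cut $Y_z$ along $M=[a]\times[z-a]$ with $a=\binom{i+j-1}{i-1}$ into $Y'\cong Y_a$ and $Y''\cong Y_{z-a}$, and observe that the rectangles reaching into $Y'$ and those reaching into $Y''$ form \emph{disjoint} families, since a rectangle meeting both would contain a cell $(s',t')$ with $s'>a$ and $t'>z-a$, which lies outside $Y_z$. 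Pruning these families gives covers of $Y'$ and $Y''$; induction then yields a row $s$ of $Y'$ used at least $i$ times by the first family and a column $t$ of $Y''$ used at least $j$ times by the second, and the rectangle covering the cell $(s,t)\in M$ belongs to at most one of the two disjoint families, so row $s$ is used at least $i+1$ times or column $t$ at least $j+1$ times. Some argument of this strength --- or a genuinely worked-out encoding with an injectivity proof --- is exactly what your proposal is missing, so as it stands the theorem is not proved.
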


We prove \cref{thm:main-simple} in \cref{sec:proof-main}, answer the
questions raised by Kim~\textit{et al.} in \cref{sec:local-covering},
and describe the application to local dimension of posets in
\cref{sec:local-dimension}.

%%%%%%%%%%%%%%%%%%%%%%%%%%%%%%%%%%
%%            Proof             %%
%%%%%%%%%%%%%%%%%%%%%%%%%%%%%%%%%%
\section{Proof of \cref{thm:main-simple}}
\label{sec:proof-main}

Throughout we shall simply use the term \emph{rectangle} for
generalized rectangles, and rely on the term \emph{actual rectangle}
when specifically meaning rectangles that are contiguous.  For a Young
diagram $Y$ and $i,j \in \mathbb{N}$, let us define a cover $C$ of $Y$
to be \emph{$(i,j)$-local} if each row of $Y$ is used by at most $i$
rectangles in $C$ and each column of $Y$ is used by at most $j$
rectangles in $C$. 
For $z \in \mathbb{N}$, let $Y_z = \{ (s,t) \in [z] \times [z] \mid s
+ t \leq z+1 \}$ be the (unique) Young diagram with $z$ rows, $z$
columns, and $z$ steps.
See the right of \cref{fig:Young-diagram}.

We start with a lemma stating that instead of considering any Young diagram with $z$ steps, we may restrict our attention to just $Y_z$.

\begin{lemma}\label{lem:reduction-to-Yk}
  Let $i,j,z \in \mathbb{N}$ and $Y$ be any Young diagram with $z$
  steps.  Then~$Y$ admits an $(i,j)$-local cover if and only if $Y_z$
  admits an $(i,j)$-local cover with exactly $z$ rectangles.
\end{lemma}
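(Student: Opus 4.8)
The plan is to exploit the block structure of a Young diagram with $z$ steps. First I would observe that in any Young diagram $Y$ with $z$ steps the rows split into $z$ maximal blocks of equal length and the columns into $z$ maximal blocks of equal height; within one row-block all rows agree and within one column-block all columns agree, so membership in $Y$ is constant on each combinatorial block. Contracting every block to a single line yields a Young diagram with exactly $z$ rows, $z$ columns and $z$ distinct row lengths, which by uniqueness must be $Y_z$. Encoding this, I would fix order-preserving surjections $\rho$ from the rows of $Y$ onto $[z]$ and $\gamma$ from the columns of $Y$ onto $[z]$, sending each line to the index of its block, so that $(s,t)\in Y$ if and only if $(\rho(s),\gamma(t))\in Y_z$. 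Everything then reduces to transporting covers along $\rho$ and $\gamma$.

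For the direction assuming a cover of $Y_z$, I would simply inflate. Given an $(i,j)$-local cover of $Y_z$ (the hypothesis even supplies one with $z$ rectangles, though I only need existence), replace each rectangle $S\times T$ by $\rho^{-1}(S)\times\gamma^{-1}(T)$. Each such set lies in $Y$ by the displayed equivalence, the inflated rectangles still cover $Y$, and a row $s$ of $Y$ is used by an inflated rectangle exactly when row $\rho(s)$ was used by the original; hence the number of rectangles using $s$ equals the number using $\rho(s)$, which is at most $i$, and symmetrically for columns. Thus $Y$ admits an $(i,j)$-local cover.

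For the converse I would first restrict and then reduce. Choosing one representative row in each of the $z$ row-blocks and one representative column in each of the $z$ column-blocks identifies a $z\times z$ subgrid of $Y$ that is a copy of $Y_z$; intersecting every rectangle of a given $(i,j)$-local cover of $Y$ with this subgrid yields a cover of $Y_z$, and since each representative row (column) was used by at most $i$ (resp.\ $j$) rectangles of the original cover, the restricted cover is again $(i,j)$-local. This cover need not have exactly $z$ rectangles, but it has at least $z$: the $z$ steps $(a,z+1-a)$ of $Y_z$ must all be covered, and no rectangle can contain two of them, since if $S\times T\subseteq Y_z$ met steps $a<a'$ then $(a',z+1-a)$ would lie in the rectangle but outside $Y_z$. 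To descend to exactly $z$ rectangles I would group the rectangles by their largest row index $\max S$ and, within each group, merge all rectangles $S_\ell\times T_\ell$ into the single rectangle $\bigl(\bigcup_\ell S_\ell\bigr)\times\bigl(\bigcup_\ell T_\ell\bigr)$. This merged set is still contained in $Y_z$: every $R=S\times T\subseteq Y_z$ satisfies $\max S+\max T\le z+1$, so sharing $\max S=a$ forces $\max T_\ell\le z+1-a$ for each, whence the union stays inside $\{1,\dots,a\}\times\{1,\dots,z+1-a\}\subseteq Y_z$. Merging only ever unites rectangles, so it cannot raise the number of rectangles using any line; afterwards at most one rectangle survives per value in $[z]$, and combined with the lower bound of $z$ this gives exactly $z$ rectangles, all within the $(i,j)$-budget.

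I expect the main obstacle to be precisely this last reduction to $z$ rectangles while respecting the locality constraints. The obvious route to few rectangles, covering each step $(a,z+1-a)$ by the maximal rectangle $\{1,\dots,a\}\times\{1,\dots,z+1-a\}$, destroys locality, since the top rows then carry loads growing with $z$. The merge-by-$\max S$ operation is what I would rely on to shrink the rectangle count monotonically without ever increasing any row or column load, and checking that each merge keeps the union inside $Y_z$ and produces a rectangle meeting at most one step is the one computation I would carry out in full.
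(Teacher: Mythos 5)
Your proof is correct and takes essentially the same approach as the paper: your inflation along block preimages is exactly the paper's construction of the rectangles $R'_a$, and your restrict-then-merge argument matches the paper's merge-then-contract argument (the paper merges rectangles lying under a common step of $Y$ and then deletes unused rows/columns one at a time, while you restrict to representative rows/columns first and merge by $\max S$ inside $Y_z$ --- a cosmetic reordering). If anything, you are slightly more explicit than the paper in proving that every cover of $Y_z$ needs at least $z$ rectangles, a point the paper leaves implicit.
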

\begin{proof}
First assume that $Y$ admits an $(i,j)$-local cover $C$.
If $C$ consists of strictly more than $z$ rectangles, then there are
$R_1,R_2 \in C$, $R_1 \neq R_2$, such that $R_1,R_2 \subseteq [s]
\times [t]$ for some step $(s,t) \in Z$.  However, in this case $C -
\{R_1,R_2\} + \{R_1 \cup R_2\}$ is also an $(i,j)$-local cover of $Y$
with one rectangle less.  Thus, by repeating this argument, we may
assume that $|C| = z$.
 
 \begin{figure}[t] \centering \includegraphics{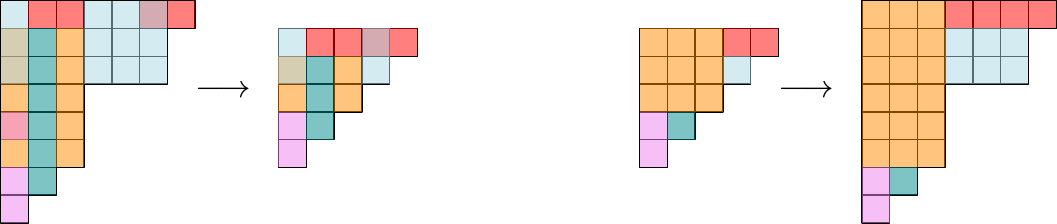}
  \caption{ Transforming a cover of any Young diagram $Y$ with $5$
steps into a cover of $Y_5$ (left) and vice versa (right).  }
  \label{fig:Yk-lemma}
 \end{figure}
 
If $Y \neq Y_z$, there is a row $s$ or a column $t$ that is not used
by any step in $Z$.  
% Remove row $s$, respectively column $t$, from $Y$, i.e., consider $Y' = Y \cap ((\mathbb{N} - \{s\}) \times \mathbb{N})$, respectively $Y' = Y \cap (\mathbb{N} \times (\mathbb{N} - \{t\}))$, and a
Apply the mapping $\mathbb{N} \times \mathbb{N} \to
\mathbb{N} \times \mathbb{N}$ with
\begin{eqnarray*}
(x,y) \mapsto
  \begin{cases} (x,y) & \text{ if } x < s\\
              (x-1,y) & \text{ if } x \geq s
  \end{cases}
& \text{ respectively } &
(x,y) \mapsto
  \begin{cases} (x,y) & \text{ if } y < t\\
              (x,y-1) & \text{ if } y \geq t
  \end{cases}
\end{eqnarray*}
% between $(\mathbb{N}-\{s\}) \times \mathbb{N}$, respectively $\mathbb{N} \times (\mathbb{N}-\{t\})$, and $\mathbb{N}\times \mathbb{N}$ to the Young diagram $Y$ and each rectangle in $C$.
Intuitively, we cut out row~$s$ (respectively column $t$), moving all
rows below one step up (respectively all columns to the right one step
left).  This gives an $(i,j)$-local cover of a smaller Young diagram
with $z$ steps, and eventually leads to an $(i,j)$-local cover of
$Y_z$, as desired.  See the left of \cref{fig:Yk-lemma}.
\medskip
 
 On the other hand, if $Y_z$ admits an $(i,j)$-local cover $C =
\{R_1,\ldots,R_z\}$, this defines an $(i,j)$-local cover of $Y$ as
follows.  Index the rows used by the steps $Z$ of $Y$ by $s_1 < \cdots < s_z$ and the
columns used by the steps $Z$ of $Y$ by $t_1 < \cdots < t_z$ and let $s_0 = t_0 = 0$.
Defining
 \[ R'_a = \{(s,t) \in Y \mid s_{x-1} < s \leq s_x \text{ and }
t_{y-1} < t \leq t_y \text{ for some } (x,y) \in R_a\} 
% R'_a = \{x
% \in \mathbb{N} \mid s_{\ell-1} < x \leq s_\ell, \ell \in S_a\} \times
% \{y \in \mathbb{N} \mid t_{\ell-1} < y \leq t_{\ell+1}, \ell \in T_a\}
 \] for $a = 1,\ldots,z$ gives an $(i,j)$-local cover
$\{R'_1,\ldots,R'_z\}$ of $Y$.  See the right of \cref{fig:Yk-lemma}.

Observe that the construction maps an actual rectangle $R_a$ of $Y_z$
to an actual rectangle $R'_a$ of $Y$.  
Also, if $\{R_1,\ldots,R_z\}$ is a partition of $Y_z$, then $\{R'_1,\ldots,R'_z\}$ is a partition of $Y$.
This will be used in the proof
of \cref{enum:construction} of \cref{thm:main-general}.
\end{proof}

Let us now turn to our main result.
In fact, we shall prove the following strengthening of \cref{thm:main-simple}.

\begin{theorem}\label{thm:main-general}
For any $i,j,z \in \mathbb{N}$
and any Young diagram $Y$ with $z$ steps, the following hold.
 \begin{enumerate}[label = (\roman*)]
  \item If $z < \binom{i+j}{i}$, then there exists an $(i,j)$-local
    partition of $Y$ with actual rectangles.
    \label{enum:construction}
  \item If $z \geq \binom{i+j}{i}$, then there exists no $(i,j)$-local
    cover of $Y$ with generalized rectangles.
    \label{enum:argument}
 \end{enumerate}
\end{theorem}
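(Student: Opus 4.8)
The plan is to treat the two parts separately, in both cases passing to the staircase $Y_z$ via \cref{lem:reduction-to-Yk} and exploiting the Pascal recursion $\binom{i+j}{i}=\binom{i+j-1}{i-1}+\binom{i+j-1}{i}$. Throughout I use that the steps of $Y_z$ are exactly the anti-diagonal cells $(s,z+1-s)$ for $s=1,\dots,z$, and that any rectangle $R=S\times T\subseteq Y_z$ containing $(s,z+1-s)$ must satisfy $\max S=s$ and $\max T=z+1-s$, since $\max S+\max T\le z+1$. In particular each rectangle of a cover of $Y_z$ contains at most one step.

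For \cref{enum:construction} I would induct on $i+j$, proving the statement for $Y_z$ and then transporting the partition to an arbitrary $Y$ by the partition- and actual-rectangle-preserving direction of \cref{lem:reduction-to-Yk}. Given $z<\binom{i+j}{i}$ with $i,j\ge 1$, I write $z-1=z_1+z_2$ with $0\le z_1<\binom{i+j-1}{i-1}$ and $0\le z_2<\binom{i+j-1}{i}$; this is possible because $z-1\le\binom{i+j-1}{i-1}+\binom{i+j-1}{i}-2$. I then cut $Y_z$ into the actual rectangle $R^*=[z_1+1]\times[z_2+1]$, the region $A$ lying in rows $1,\dots,z_1$ and columns $z_2+2,\dots,z$, and the region $B$ lying in rows $z_1+2,\dots,z$ and columns $1,\dots,z_2$. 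A short check shows that these three parts are disjoint, cover $Y_z$, and that $A\cong Y_{z_1}$ and $B\cong Y_{z_2}$. Recursively take an $(i-1,j)$-local partition of $A$ and an $(i,j-1)$-local partition of $B$ into actual rectangles and add $R^*$. Then each row in $1,\dots,z_1$ is used once by $R^*$ and at most $i-1$ times inside $A$, row $z_1+1$ only by $R^*$, and the rows below only inside $B$; the columns are symmetric. Hence the combined partition is $(i,j)$-local with actual rectangles, and the recursion bottoms out at empty staircases, so no invalid parameter is ever reached.

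For \cref{enum:argument} I would use \cref{lem:reduction-to-Yk} to reduce a hypothetical $(i,j)$-local cover of an arbitrary $Y$ with $z$ steps to an $(i,j)$-local cover $C$ of $Y_z$. For each $s\in[z]$ let $X_s\subseteq C$ be the set of rectangles using row $s$ and $Y_s\subseteq C$ the set using column $z+1-s$, so $|X_s|\le i$ and $|Y_s|\le j$. The heart of the argument is the biconditional $X_s\cap Y_{s'}\ne\varnothing\iff s\le s'$: a rectangle lies in $X_s\cap Y_{s'}$ exactly when it contains the cell $(s,z+1-s')$, and (since every rectangle is contained in $Y_z$ and every cell of $Y_z$ is covered) such a rectangle exists precisely when $(s,z+1-s')\in Y_z$, i.e. when $s\le s'$. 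Setting $Y_0=\varnothing$, I then form the $z+1$ set-pairs $(X_t,Y_{t-1})$ for $t=1,\dots,z$ together with $(\varnothing,Y_z)$. By the biconditional each pair has empty intersection, namely $X_t\cap Y_{t-1}=\varnothing$ because $t>t-1$, whereas for $t<t'$ one has $X_t\cap Y_{t'-1}\ne\varnothing$. These are exactly the hypotheses of the skew version of the Bollobás set-pair inequality, which yields $z+1\le\binom{i+j}{i}$. Thus a cover forces $z<\binom{i+j}{i}$, so no $(i,j)$-local cover exists once $z\ge\binom{i+j}{i}$.

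The main obstacle is \cref{enum:argument}: the construction is a transparent divide-and-conquer, but for the impossibility one cannot assume that the cover respects any geometric split, so a global principle is required. Recognising the correct one — the skew Bollobás inequality, whose parameters $i,j$ produce exactly $\binom{i+j}{i}$ — is the crux, as is the bookkeeping that gains the final $-1$: it is the index shift (pairing the row-$t$ data with the column-$(t-1)$ data) together with the two degenerate pairs $(X_1,\varnothing)$ and $(\varnothing,Y_z)$ that upgrades the bare $z\le\binom{i+j}{i}$ to the strict $z<\binom{i+j}{i}$ demanded by the theorem. I would take particular care to verify the orientation of the skew condition and the admissibility of empty sets in the set-pair inequality, since a reversed inequality or an off-by-one there would be fatal.
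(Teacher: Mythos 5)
Your proposal is correct. In \cref{enum:construction} you follow essentially the paper's construction: the paper likewise cuts $Y_z$ along a Pascal-sized actual rectangle into two staircases handled with parameters $(i-1,j)$ and $(i,j-1)$, and transports the partition to a general $Y$ via \cref{lem:reduction-to-Yk}; your flexible split $z-1=z_1+z_2$ merely absorbs the paper's separate restriction step for non-extremal $z$, and your observation that the parameter $0$ only occurs alongside an empty region replaces the paper's explicit $i=1$, $j=1$ base cases. In \cref{enum:argument}, however, you take a genuinely different route. The paper stays elementary and self-contained: by induction on $i$ and $j$ it splits $Y_z$, $z=\binom{i+j}{i}$, along $M=[a]\times[z-a]$ with $a=\binom{i+j-1}{i-1}$, prunes the cover to the residual staircases $Y'\cong Y_a$ and $Y''\cong Y_{z-a}$, obtains by induction a row $s$ used by at least $i$ rectangles meeting $Y'$ and a column $t$ used by at least $j$ rectangles meeting $Y''$, and concludes because no rectangle contained in $Y_z$ can meet both $Y'$ and $Y''$, so the rectangle covering $(s,t)\in M$ forces $i+1$ uses of row $s$ or $j+1$ uses of column $t$. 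You instead encode the cover as a skew cross-intersecting family of set pairs over the ground set $C$ and invoke the skew Bollob\'as inequality of Frankl and Kalai. This is valid: your biconditional $X_s\cap Y_{s'}\neq\emptyset\iff s\le s'$ holds for exactly the reasons you give, the skew orientation matches, and the index shift together with the two degenerate pairs yields $z+1\le\binom{i+j}{i}$; allowing empty sets and the bounds $|X_s|\le i$, $|Y_{s'}|\le j$ in place of exact sizes is harmless, since padding every set with fresh elements reduces to the uniform statement. What each approach buys: yours is shorter and explains conceptually why $\binom{i+j}{i}$ appears --- indeed, by \cref{enum:construction} your set-pair family attains the Frankl--Kalai bound with equality --- but it imports a nontrivial external theorem whose standard proofs are algebraic (exterior algebra); the paper's induction is longer but self-contained, in effect re-proving precisely the instance of the skew set-pair inequality it needs.
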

\begin{proof}
 First, let us prove \cref{enum:construction}.
 For shorthand notation, we define $f(i,j) := \binom{i+j}{i} - 1$.
 It will be crucial for us that the numbers $\{f(i,j)\}_{i,j \geq 1}$ solve the recursion
\begin{align}
 f(i,j) = \begin{cases}
           f(i-1,j)+f(i,j-1)+1 & \text{ if } i,j \geq 2\\
           j & \text{ if } i=1, j \geq 1\\
           i & \text{ if } i \geq 1, j=1.
          \end{cases}\label{eq:recursion}
\end{align}
This follows directly from Pascal's rule
$\binom{a}{b} = \binom{a-1}{b-1} + \binom{a-1}{b}$ for any
$a,b \in \mathbb{N}$ with $1 \leq b \leq a-1$.

Due to \cref{lem:reduction-to-Yk} it suffices to show that for any
$i,j \in \mathbb{N}$ and $z = f(i,j) = \binom{i+j}{i}-1$, there is an
$(i,j)$-local partition of $Y_z$ with actual rectangles.

We define the $(i,j)$-local partition $C$ by induction on $i$ and $j$.
For illustrations refer to~\cref{fig:construction}.

\begin{figure}
 \centering
 \includegraphics{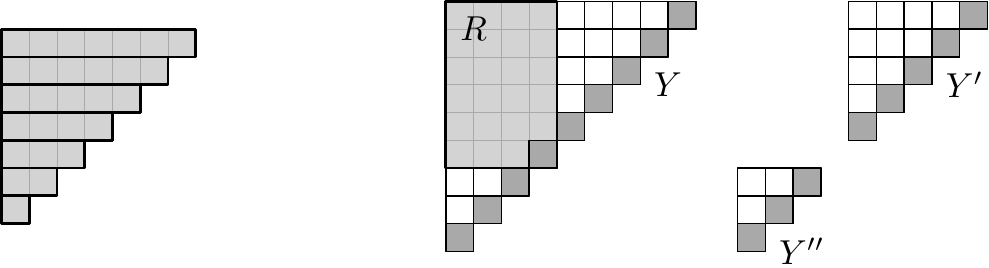}
 \caption{
  \textbf{Left:} The Young diagram $Y_z$ with $z = f(1,7) = \binom{1+7}{1} - 1 = 7$ steps and a $(1,7)$-local partition of $Y_z$ into actual rectangles.
  \textbf{Right:} The Young diagram $Y_z$ with $z = f(3,2) = \binom{3+2}{3} - 1 = 9$ steps, the rectangle $R = [a] \times [z+1-a] = [6] \times [4]$ with $a = f(2,2)+1 = 6$, and the Young diagrams $Y'$ and $Y''$ with $f(2,2) = 5$ and $f(3,1) = 3$ steps, respectively.
 }
 \label{fig:construction}
\end{figure}

If $i=1$, respectively $j=1$, then $C$ is the set of rows of $Y_j$,
respectively the set of columns of $Y_i$.
If $i \geq 2$ and $j \geq 2$, then $z = f(i,j) = f(i-1,j) + f(i,j-1) + 1$ by~\eqref{eq:recursion}.
Consider the actual rectangle $R = [a] \times [z+1-a]$ for $a = f(i-1,j) + 1$.
Then $Y_z - R$ splits into a right-shifted copy $Y'$ of $Y_{a-1}$ and
a down-shifted copy~$Y''$ of $Y_{z-a}$.
Note that $a-1 = f(i-1,j)$ and $z-a = f(i,j-1)$.

By induction we have an $(i-1,j)$-local cover $C'$ of $Y'$
and an $(i,j-1)$-local cover $C''$ of $Y''$, each consisting
of pairwise disjoint actual rectangles.
Define
  \[
   C = \{R\} \cup C' \cup C'',
 \]
this is a cover of $Y_z$ consisting of pairwise disjoint actual
rectangles.  Rows $1$ to~$a$ are used by $R$ and at most $i-1$
rectangles in $C'$, and rows $a+1$ to $z$ are used by at most~$i$
rectangles in $C''$.  Hence each row of $Y_z$ is used by at most $i$
rectangles in $C$.  Similarly each column of $Y_z$ is used by at most
$j$ rectangles in~$C$.  Thus $C$ is an $(i,j)$-local partition of
$Y_z$ by actual rectangles, as desired.

For $z' < z = f(i,j)$ we obtain an $(i,j)$-local
partition of $Y_{z'}$ by restricting the rectangles of the cover $C$ of $Y_z$ to the
rows from $z-z'$ to $z$. This yields an $(i,j)$-local partition of 
a down-shifted copy $Y'$ of $Y_{z'}$.

\bigskip

Now, let us prove \cref{enum:argument}. Due to
\cref{lem:reduction-to-Yk} it is sufficient to show that for
$i,j \in \mathbb{N}$ the Young diagram $Y_{z'}$ with $z' \geq \binom{i+j}{i}$
admits no $(i,j)$-local cover. If $Y_{z'}$ with $z' > z = \binom{i+j}{i}$
has an $(i,j)$-local cover, then by restricting the rectangles of the cover to the
rows from $z'-z$ to $z'$ we obtain an $(i,j)$-local cover of 
a down-shifted copy of $Y_z$. Therefore, we only have to consider
$Y_z$.

Let $C$ be a cover of $Y_z$.
We shall prove that $C$ is not $(i,j)$-local.
Again, we proceed by induction on $i$ and $j$, where illustrations are given in~\cref{fig:argument}.

\begin{figure}
 \centering
 \includegraphics{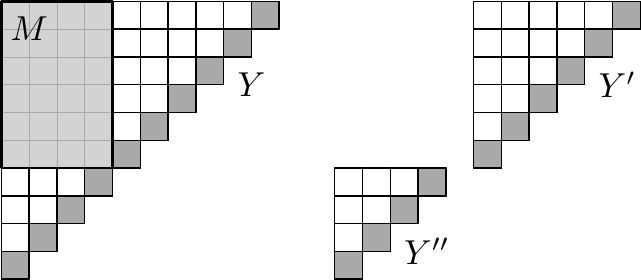}
 \caption{The Young diagram $Y_z$ with $z = \binom{3+2}{3} = 10$ steps, the rectangle $M = [a] \times [z-a] = [6] \times [4]$ with $a = \binom{2+2}{2} = 6$, and the Young diagrams $Y'$ and $Y''$ with $\binom{2+2}{2} = 6$ and $\binom{3+1}{3} = 4$ steps, respectively.}
 \label{fig:argument}
\end{figure}

If $i=1$, then each row is only used by a single rectangle in $C$,
otherwise,~$C$ would not be $(1,j)$-local. Hence, each row of $Y_z$ is a
rectangle in $C$. Thus column~$1$ of $Y_z$ is used by $z = j+1$
rectangles, proving that $C$ is not $(i,j)$-local.

The case $j=1$ is symmetric to the previous by exchanging rows and columns.

Now let $i \geq 2$ and $j \geq 2$.  We have
$z = \binom{i+j}{i} = \binom{(i-1)+j}{i-1} + \binom{i+(j-1)}{i}$.
Consider the rectangle $M = [a] \times [z-a]$ for
$a = \binom{(i-1)+j}{i-1}$.  Then $Y_z - M$
splits into a right-shifted $Y'$ copy of $Y_a$ and a
down-shifted copy $Y''$ of $Y_{z-a}$.
Note that $z-a = \binom{i+(j-1)}{i}$.

Let $C'$, respectively $C''$, be the subset of rectangles in $C$
using at least one of the rows $1,\ldots,a$ in $Y_z$, respectively
at least one of the columns $1,\ldots,z-a$ in $Y_z$. Note that
$C'\cap C'' = \emptyset$ as each generalized rectangle is contained
in $Y_z$.
  
Prune each rectangle in $C'$ to the columns $z-a+1,\ldots,z$ and each
rectangle in~$C''$ to the rows $a+1,\ldots,z$. This yields
covers of $Y'$ and $Y''$.

The Young diagram $Y'$ is a copy of $Y_a$ and
$a=\binom{(i-1)+j}{i-1}$.  Hence, by induction the pruned cover $C'$ is not $(i-1,j)$-local.
If some column $t$ of $Y'$ is used by at least $j+1$
rectangles in $C'$, this column of $Y_z$ is used by at least $j+1$
rectangles in~$C$, proving that $C$ is not $(i,j)$-local, as desired.
So we may assume that some row~$s$ of $Y'$ is used by at least $i$
rectangles in $C'$.
  
Symmetrically, $Y''$ is a copy of $Y_{z-a}$ and $z-a = \binom{i+(j-1)}{i}$.
Hence, the pruned $C''$ is a cover of $Y''$, which by induction is not
$(i,j-1)$-local, and we may assume that some column $t$ of $Y''$ is
used by at least $j$ rectangles in $C''$.  Hence row $s$ in $Y_z$ is
used by at least $i$ rectangles in $C'$ and column $t$ in $Y_z$ is
used by at least $j$ rectangles in $C''$.  As
$C'\cap C'' = \emptyset$ and element $(s,t)$ is contained in
some rectangle of $C$, either row $s$ of $Y_z$ is used by at least
$i+1$ rectangles or column $t$ of $Y_z$ is used by at least $j+1$
rectangles (or both), proving that $C$ is not $(i,j)$-local.
\end{proof}

Finally, \cref{thm:main-simple} follows from \cref{thm:main-general} by setting $i=j=k$.

%%%%%%%%%%%%%%%%%%%%%%%%%%%%%%%%%%
%%       COVERING NUMBERS       %%
%%%%%%%%%%%%%%%%%%%%%%%%%%%%%%%%%%
\section{Local covering numbers}
\label{sec:local-covering}

In~\cite{KMMSSUW18}, Kim \textit{et al.} introduced the concept of covering a Young diagram with generalized rectangles subject to minimizing the maximum number of rectangles in any row or column.
Their motivation was to investigate the relations between \emph{local difference cover numbers} and \emph{local complete bipartite cover numbers}, which are defined as follows\footnote{Deviating from~\cite{KMMSSUW18}, we follow here the terminology and notation of local covering numbers introduced in~\cite{KU16}.}.

A \emph{difference graph} is a bipartite graph in which the vertices of one partite set can be ordered $a_1,\ldots,a_r$ in such a way that $N(a_i) \subseteq N(a_{i-1})$ for $i=2,\ldots,r$, i.e., the neighborhoods of these vertices along this ordering are weakly nesting.
Equivalently, a bipartite graph $H = (V,E)$ with bipartition $V = A \disver B$, $|A| = r, |B| = c$, is a difference graph if $H$ admits a bipartite adjacency matrix $M = (m_{s,t})_{s \in A, t \in B}$ whose support is a Young diagram $Y \subseteq [r] \times [c]$:
\[
 \forall s \in A, t \in B\colon \qquad 
 \{s,t\} \in E \quad \Leftrightarrow \quad (s,t) \in Y \quad \Leftrightarrow \quad m_{s,t} = 1
\]
Then complete bipartite subgraphs $G$ of $H$ correspond precisely to generalized rectangles $R$ in $Y$.
Rows and columns of $M$ correspond to vertices of $H$ in $A$ and $B$, respectively.

Following the notation in~\cite{KU16}, local covering numbers are defined as follows.
For a graph class $\Guestclass$ and a graph $\Host$, an \emph{injective $\Guestclass$-covering} of $\Host$ is a set of graphs $\Guest_1,\ldots,\Guest_t \in \Guestclass$ with $\Host = \Guest_1 \cup \cdots \cup \Guest_t$.
An injective $\Guestclass$-covering of $\Host$ is \emph{$k$-local} if every vertex of $\Host$ is contained in at most $k$ of the graphs $\Guest_1,\ldots,\Guest_t$, and the \emph{local $\Guestclass$-covering number} of $\Host$, denoted by $\cn{\ell}{\Guestclass}{\Host}$, is the smallest $k$ for which a $k$-local injective $\Guestclass$-cover of $\Host$ exists.
 
Let $\Differences$ denote the class of all difference graphs, and $\CBipartites \subset \Differences$ the class of all complete bipartite graphs.
Clearly, we have $\cn{\ell}{\Differences}{\Host} \leq \cn{\ell}{\CBipartites}{\Host}$ for all graphs~$\Host$.
Kim \textit{et al.}~\cite{KMMSSUW18} asked whether there is a sequence of graphs $(\Host_i \colon i \in \mathbb{N})$ for which $\cn{\ell}{\Differences}{\Host_i}$ is constant while $\cn{\ell}{\CBipartites}{\Host_i}$ is unbounded.
They prove that for all graphs $\Host$ on $n$ vertices,
\[
 \cn{\ell}{\CBipartites}{\Host} \leq \cn{\ell}{\Differences}{\Host} \cdot \left\lceil \log_2(n/2+1) \right\rceil,
\]
by showing that
$\cn{\ell}{\CBipartites}{\Host} \leq \lceil \log_2(r+1) \rceil$
whenever $\Host \in \Differences$ is a difference graph with one
partite set of size $r$.  However, no lower bound on
$\cn{\ell}{\CBipartites}{\Host}$ for $\Host \in \Differences$ is
established in~\cite{KMMSSUW18}.  Specifically, Kim \textit{et al.}
ask for the exact value of $\cn{\ell}{\CBipartites}{\Host_i}$ for the
difference graph $H_i$ corresponding to the Young diagram $Y_i$.
For the case that $i+1$ is a power of $2$ they prove the upper bound
$\cn{\ell}{\CBipartites}{\Host_i} \leq \log_2(i+1)-1$.

\medskip

Using \cref{thm:main-simple} and $\binom{2k}{k} = (1 + o(1))\frac{1}{\sqrt{k\pi}}2^{2k}$, we see that
\begin{itemize}
 \item for every difference graph $\Host$ the exact value of $\cn{\ell}{\CBipartites}{\Host}$ is the smallest $k \in \mathbb{N}$ such that for the number $z$ of steps\footnote{In terms of graphs, this is the number of different sizes of neighborhoods in one partite set.} of $\Host$ it holds $z < \binom{2k}{k}$,
 
 \item the difference graphs $\Host_i$, $i \in \mathbb{N}$, defined by Kim \textit{et al.} satisfy
  \[
   \cn{\ell}{\CBipartites}{\Host_i} = (1 + o(1)) \frac{1}{2}\log_2 i,
  \]
  
 \item for this sequence $(\Host_i \colon i \in \mathbb{N})$ of difference graphs $\cn{\ell}{\Differences}{\Host_i}$ is constant $1$, while $\cn{\ell}{\CBipartites}{\Host_i}$ is unbounded, and
  
 \item for all graphs $\Host$ on $n$ vertices,
  \[
   \cn{\ell}{\CBipartites}{\Host} \leq \cn{\ell}{\Differences}{\Host} \cdot (1 + o(1)) \frac{1}{2}\log_2 (n/2).
  \]
\end{itemize}

%%%%%%%%%%%%%%%%%%%%%%%%%%%%%%%%%%
%%        LOCAL DIMENSION       %%
%%%%%%%%%%%%%%%%%%%%%%%%%%%%%%%%%%
\section{Local dimension of posets}
\label{sec:local-dimension}

The motivation for Kim \textit{et al.}~\cite{KMMSSUW18} to study local difference cover numbers comes from the local dimension of posets, a notion recently introduced by Ueckerdt~\cite{U16}.

For a partially ordered set (short poset) $\mathcal{P} = (P,\leq)$, define a \emph{partial linear extension} of $\mathcal{P}$ to be a linear extension $L$ of an induced subposet of $\mathcal{P}$.
A \emph{local realizer} of $\mathcal{P}$ is a non-empty set $\mathcal{L}$ of partial linear extensions such that
\textbf{(1)} if $x < y$ in $\mathcal{P}$, then $x < y$ in some $L \in \mathcal{L}$, and 
\textbf{(2)} if $x$ and $y$ are incomparable (denoted $x || y$), then $x < y$ in some $L \in \mathcal{L}$ and $y < x$ in some $L' \in \mathcal{L}$.
The \emph{local dimension} of $\mathcal{P}$, denoted $\ldim(\mathcal{P})$, is then the smallest $k$ for which there exists a local realizer $\mathcal{L}$ of $\mathcal{P}$ with each $x \in P$ appearing in at most $k$ partial linear extensions $L \in \mathcal{L}$.

For an arbitrary height-two poset $\mathcal{P} = (P,\leq)$, Kim \textit{et al.} consider the bipartite graph $G_{\mathcal{P}} = (P,E)$ with partite sets $A = \min(\mathcal{P})$ and $B = P - \min(\mathcal{P}) \subseteq \max(\mathcal{P})$ whose edges correspond to the so-called critical pairs:
\[
 \forall x \in A, y \in B \colon \qquad 
 \{x,y\} \in E \quad \Leftrightarrow \quad x || y \text{ in } \mathcal{P}
\]
They prove that
\[
 \cn{\ell}{\Differences}{G_{\mathcal{P}}} - 2 \leq \ldim(\mathcal{P}) \leq \cn{\ell}{\CBipartites}{G_{\mathcal{P}}} + 2,
\]
which also gives good bounds for $\ldim(\mathcal{P})$ when $\mathcal{P}$ has larger height, since we have
\[
 \ldim(\mathcal{Q})-2 \leq \ldim(\mathcal{P}) \leq 2\ldim(\mathcal{Q})-1
\]
for the associated height-two poset $\mathcal{Q}$ known as the split
of $\mathcal{P}$ (see~\cite{BCPSTT17}, Lemma 5.5). Using these
results and the ones from the previous section, we can conclude the
following for the local dimension of any poset.

\begin{corollary}
 For any poset $\mathcal{P}$ on $n$ elements with split $\mathcal{Q}$ we have
 \[
  \cn{\ell}{\Differences}{G_{\mathcal{Q}}}-4 \leq \ldim(\mathcal{P}) \leq \cn{\ell}{\Differences}{G_{\mathcal{Q}}} \cdot (1 + o(1))\log_2 n.
 \]

\end{corollary}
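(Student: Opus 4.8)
The plan is to chain together the three inequalities supplied just above the statement: the Kim \textit{et al.} sandwich bound for height-two posets, the split relation, and the asymptotic bound from \cref{sec:local-covering} relating $\cn{\ell}{\CBipartites}{\cdot}$ to $\cn{\ell}{\Differences}{\cdot}$. The observation that makes everything apply is that the split $\mathcal{Q}$ of $\mathcal{P}$ is itself a height-two poset, so the Kim \textit{et al.} bounds
\[
 \cn{\ell}{\Differences}{G_{\mathcal{Q}}} - 2 \leq \ldim(\mathcal{Q}) \leq \cn{\ell}{\CBipartites}{G_{\mathcal{Q}}} + 2
\]
are available with $\mathcal{Q}$ in place of $\mathcal{P}$.

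For the lower bound I would combine the left inequality of the split relation, $\ldim(\mathcal{P}) \geq \ldim(\mathcal{Q}) - 2$, with the lower Kim \textit{et al.} bound $\ldim(\mathcal{Q}) \geq \cn{\ell}{\Differences}{G_{\mathcal{Q}}} - 2$. Substituting one into the other gives $\ldim(\mathcal{P}) \geq \cn{\ell}{\Differences}{G_{\mathcal{Q}}} - 4$ directly, with no asymptotics required.

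For the upper bound I would start from the right inequality of the split relation, $\ldim(\mathcal{P}) \leq 2\ldim(\mathcal{Q}) - 1$, apply the upper Kim \textit{et al.} bound $\ldim(\mathcal{Q}) \leq \cn{\ell}{\CBipartites}{G_{\mathcal{Q}}} + 2$ to obtain $\ldim(\mathcal{P}) \leq 2\cn{\ell}{\CBipartites}{G_{\mathcal{Q}}} + 3$, and then invoke the asymptotic bound $\cn{\ell}{\CBipartites}{\Host} \leq \cn{\ell}{\Differences}{\Host} \cdot (1+o(1))\tfrac{1}{2}\log_2(N/2)$ with $\Host = G_{\mathcal{Q}}$. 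Here the careful bookkeeping is that the split doubles the ground set, so $\mathcal{Q}$ has $N = 2n$ elements and $G_{\mathcal{Q}}$ has $2n$ vertices; therefore $\log_2(N/2) = \log_2 n$, and the factor $2$ in front exactly cancels the factor $\tfrac{1}{2}$, yielding $\ldim(\mathcal{P}) \leq \cn{\ell}{\Differences}{G_{\mathcal{Q}}} \cdot (1+o(1))\log_2 n + 3$.

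The last step is to absorb the additive constant $+3$ into the $(1+o(1))$ factor, which is legitimate as $n \to \infty$. The main (and essentially only) obstacle is this asymptotic bookkeeping: one must confirm that the vertex count of $G_{\mathcal{Q}}$ is $2n$ rather than $n$, so that $\log_2(N/2)$ collapses to exactly $\log_2 n$ and the leading constant comes out as $1$ rather than something larger. Once the vertex count is pinned down, the remainder is a mechanical substitution of the three cited inequalities.
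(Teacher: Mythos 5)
Your proof is correct and follows exactly the route the paper intends (the paper states the corollary as an immediate consequence of the three cited inequalities without writing out the chain): apply the Kim \textit{et al.} sandwich to the height-two poset $\mathcal{Q}$, combine with the split inequalities $\ldim(\mathcal{Q})-2 \leq \ldim(\mathcal{P}) \leq 2\ldim(\mathcal{Q})-1$, and then use $\cn{\ell}{\CBipartites}{G_{\mathcal{Q}}} \leq \cn{\ell}{\Differences}{G_{\mathcal{Q}}} \cdot (1+o(1))\tfrac{1}{2}\log_2 n$ since $G_{\mathcal{Q}}$ has $2n$ vertices, so the factor $2$ from the split cancels the $\tfrac{1}{2}$ and the additive constant is absorbed into the $(1+o(1))$ term. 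Your bookkeeping on the vertex count of $G_{\mathcal{Q}}$ is precisely the point that makes the stated constant work out.
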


%%%%%%%%%%%%%%%%%%%%%%%%%%%%%%%%%%
%%       Ferrers Dimension      %%
%%%%%%%%%%%%%%%%%%%%%%%%%%%%%%%%%%
\section{Ferrers Dimension}
\label{sec:ferrers}
\def\PP{\mathcal{P}}

The aim of this section is to provide some links to
research where related things have been investigated with a
different terminology.

A \emph{Ferrers diagram} is a Young diagram. Typically  Ferrers
diagrams are defined as graphical visualizations of integer
partitions.

Riguet~\cite{Ri51} defined a \emph{Ferrers relation}\footnote{
  According to~\cite{EFO08} Ferrers relations have also been studied
  under the names of biorders, Guttman scales, and bi-quasi-series.
} %%
as a relation $R \subset X \times Y$ such that
\begin{quote}
  $(x,y) \in R$ and $(x',y') \in R$ $\quad\implies\quad$ $(x,y') \in R$ or $(x',y) \in R$.
\end{quote}
A relation $R \subset X \times Y$ can be viewed as a digraph $D$ with
$V_D = X\cup Y$ and $E_D=R$. A digraph thus corresponding to a Ferrers
relation is a \emph{Ferrers digraph}. Riguet characterized Ferrers
digraphs as those in which the sets $N^+(v)$ of out-neighbors are linearly
ordered by inclusion. Hence, bipartite Ferrers digraphs are exactly
the difference graphs.

By playing with $x=x'$ and/or $y=y'$ in the definition of a Ferrers
relation it can be shown that Ferrers digraphs without loops are
\textbf{2+2}-free and transitive, i.e., they are interval orders.
In general, however, Ferrers digraphs are allowed to have loops. 

In the spirit of order dimension the \emph{Ferrers dimension of a
  digraph} $D$ ($\textrm{fdim}(D)$) is the minimum number of Ferrers
digraphs whose intersection is~$D$. If $\PP=(P,\leq)$ is poset and
$D_\PP$ the digraph associated with the order relation (reflexivity
implies that $D_\PP$ has loops at all vertices), then
$\dim(\PP) = \textrm{fdim}(D_\PP)$. This was shown by
Bouchet~\cite{Bo71} and Cogis~\cite{Co82}, it implies that Ferrers
dimension is a generalization of order dimension. Since Ferrers
digraphs are characterized by having a staircase shaped adjacency
matrix the complement of a Ferrers digraph is again a Ferrers digraph.
Therefore, instead of representing a digraph as intersection of
Ferrers digraphs containing ($D = \bigcap F_i$ with $D\subseteq F_i$).
We can as well represent its complement as union of Ferrers digraphs
contained in it ($\ovl{D} = \bigcap \ovl{F_i}$ with
$\ovl{F_i}\subseteq \ovl{D}$). This simple observation is sometimes
useful.

The \emph{Ferrers dimension of a relation} $R$ ($\textrm{fdim}(R)$) is
the minimum number of Ferrers relations whose intersection is $R$.
Note that if $D$ is the digraph corresponding to a relation $R$, then
$\textrm{fdim}(D) = \textrm{fdim}(R)$. Hence, the
result of Bouchet can be expressed as $\dim(\PP) =
\textrm{fdim}(P,P,\leq)$, here we use the notation $(P,P,\leq)$ to
emphasize that we interpret the order as a relation.
The interval dimension $\textrm{idim}(\PP)$ of a poset~$\PP$ is the
minimum number of interval orders extending $\PP$ whose intersection
is~$\PP$.  Interestingly interval dimension is
also nicely expressed as a special case of Ferrers dimension:
$\textrm{idim}(\PP) = \textrm{fdim}(P,P,<)$.  For this and far reaching
generalizations see Mitas~\cite{Mi95}.

Relations $R \subset X \times Y$ with $X \cap Y=\emptyset$ can be
viewed as bipartite graphs. In this setting
$\textrm{fdim}(R)$ is the global $\Differences$-covering number of
$\ovl{R}$, i.e., minimum number of difference graphs whose union is
the bipartite complement of $R$.

We believe that it is worthwhile to study local variants of Ferrers
dimension.

\section*{Acknowledgments}
 
This research has been mostly conducted during the Graph Drawing
Symposium 2018 in Barcelona. 
Special thanks go to Peter Stumpf for helpful comments and
discussions.

\bibliography{lit}
\bibliographystyle{plain}

\end{document}